\let\NAT@parse\undefined
\newcommand*\Bell{\ensuremath{\boldsymbol\ell}}
\newcommand{\mcal}[1]{\mathcal{#1}}
\newcommand{\ldef}{\stackrel{\Delta}{=}}
\newcommand{\mb}[1]{\mathbf{#1}}
\newcommand{\tth}[1]{{#1}^{\text{th}}}
\newcommand{\norm}[1]{\left\lVert #1 \right\rVert}
\newcommand{\bm}[1]{
	\begin{bmatrix}
		#1
	\end{bmatrix}
}
\newcommand{\onevec}[1]{\mathbf{1}_{#1}}
\newcommand{\real}[1]{\mathbb{R}^{#1}}
\newtheorem{thm}{Theorem}
\newtheorem{lemma}{Lemma}
\newtheorem{assmp}{Assumption}
\newtheorem{remark}{Remark}
\title{On LinDistFlow Model Congestion Pricing:\\Bounding the Changes in Power Tariffs}
\author{
\IEEEauthorblockN{Shourya Bose, Kejun Chen, and Yu Zhang} 
\IEEEauthorblockA{ECE Department, University of California, Santa Cruz}
\IEEEauthorblockA{Emails: \texttt{\{shbose,\,kchen158,\,zhangy\}@ucsc.edu}
}}%
\begin{document}
	
\maketitle

\begin{abstract}
The optimal power flow (OPF) problem is an important  mathematical program that aims at obtaining the best operating point of an electric power grid. The optimization problem typically minimizes the total generation cost subject to certain physical constraints of the system. The so-called linearized distribution flow (LinDistFlow) model leverages a set of linear equations to approximate the nonlinear AC power flows.
In this paper, we consider an OPF problem based on the LinDistFlow model for a single-phase radial power network. We derive closed-form solutions to the marginal values of both real and reactive power demands. We also derive upper bounds on the congestion price (a.k.a. `shadow price'), which denotes the change in marginal demand prices when the apparent power flow limits of certain lines are binding at optimum. Various cases of our result are discussed while simulations are carried out on a $141$-bus radial power network.
\end{abstract}

\section{Introduction}
Radially energized power networks are prevalent in grid-scale power systems such as utility distribution networks and microgrids~\cite{AAS-OPM:2018}. They are defined as power distribution networks wherein the energized section assumes a topology of a \emph{connected tree}. Traditionally, the only generation source in a radial network is a sub-station as the upstream to the network, which is interfaced with some high-voltage transmission network using power conversion devices.  
	With the advent of consumer-level generation devices such as photovoltaic panels, wind turbines, and microturbines, radial networks can now accommodate \emph{prosumers}, i.e. agents on the network which can either inject/withdraw power in/from the network. Desirable set points of generation power can be determined by solving for set points which are optimal with respect to some generation cost function, subject to physical and operational constraints. This optimization problem, well known as \emph{Optimal Power Flow} (OPF) was first introduced in literature by Carpentier~\cite{JC:1979,MH-FDG:1991}. Depending on the nature of constraints and the cost function, OPF can be variously categorized as DC-OPF, AC-OPF, security constrained OPF, etc~\cite{ZY-HZ-QX-CK:2018}. The former two are different ways of modeling physics of the network, while the latter adds security or contingency constraints meant to ensure robust operation of the network. It is important to note that in this article we only consider OPF problems wherein the network operator and prosumers seek to optimize the same cost function. This is as opposed to scenarios wherein the prosumers may seek to optimize a cost function different from that of the network operator~\cite{WT-RJ:2013,SR-ALL-YC:2020}.
	
	The DC-OPF, which models the physics of the network using a set of linear equations, has been widely researched and used in practice for transmission networks, where the topology may be meshed. Combined with administrative constraints, DC-OPF provides reasonably accurate set points \emph{vis-a-vis} the optimal AC-OPF solution~\cite{KB:2021}. However, the DC-OPF linearization does not model reactive power injection, which poses a problem in analyzing radial distribution networks with devices such as PVs and WTs having controllable inverters. To counter this drawback, there has been significant recent research on the \emph{LinDistFlow} equation~\cite{JH-BC-XZ-AB:2021}. First introduced by Baran and Wu~\cite{MEB-FFW:1989}, this is a set of linearized equations which describes the physics of the network (possibly multi-phase with high R/X ratios). The underlying condition of LinDistFlow is that there are no power line losses, which allows for linearization of the non-convex \emph{DistFlow} equations from which LinDistFlow is derived~\cite{LG-NL-UT-SL:2012}.

	\subsubsection*{Related Work} For a review of marginal demand costs in traditional OPF models including DC-OPF, the reader may consult the textbook~\cite{DSK-GS:2018}. Khatami et al. provide a detailed description of various components constituting nodal prices~\cite{RK-SN-YCC:2022}. Biegel et al. consider congestion management through shadow prices~\cite{BIEGEL2012518}. Bai et al. consider marginal pricing of real and reactive power demand under various markets for the nonlinear DistFlow model~\cite{LB-etal:2018}. Xu et al. design a deregulated power market mechanism, which uses the idea of marginal pricing at its core~\cite{7297853}. A comprehensive review of pricing mechanisms in transmission and distribution markets, including reserves, may be found in~\cite{MC-etal:2016}. Finally, similar in nature to the current paper, Winnicki et al. consider marginal pricing in the DistFlow model, but without consideration of congestion.~\cite{AW-MN-SB:2020}
	
	\subsubsection*{Contribution} In this paper, we formulate an OPF problem with the LinDistFlow model, named as LDF-OPF. We consider load satisfaction, generation bounds, voltage bounds, and conic branch flows. Our proposed framework can handle more generalized formulations with linear and conic constraints. We first show a closed-form expression for the marginal price when there is no flow congestion at optimality of LDF-OPF. Then, we derive an upper bound on the variation of demand marginal costs. The proposed upper bound is function of terms involving marginal costs associated with binding of aforementioned conic constrains representing branch flow limits, and network topology factors. This result provides a useful tool for network operators to estimate the change in demand marginal prices as a function of their choice of branch flow marginal costs. 
	
	\subsubsection*{Notation} $\mathbb{R}$ and $\mathbb{N}$ denote the set of real numbers and integers, respectively. Vectors and matrices are denoted with boldface. For a vector $\mb{a}\in\mathbb{R}^n$, $\mb{a}(j)$ is its $j^{\text{th}}$ element while $\norm{\mb{a}}_2$ denotes its 2-norm. $\onevec{n}\in\mathbb{R}^n$ is the all-ones vector. For a positive $n\in\mathbb{N}$, $[n]$ denotes the set $\{1,\cdots,n\}$. For a finite set $\mathcal{S}$, $|\mathcal{S}|$ denotes its cardinality. For a directed graph $\mcal{G}=(\mcal{N},\mcal{L})$ where $\mcal{N}$ is the set of nodes and $\mcal{L}$ the set of \emph{directed} edges, and for any node $i\in\mcal{N}$, $\mcal{H}_i$ denotes the \emph{inclusive downstream set} of $i$, i.e. $\mcal{H}_i \ldef \{j\in\mcal{N}|\exists \text{ directed path from $i$ to $j$ in }\mcal{L}\}\cup \{i\}$.
	
\section{Problem Formulation}
\subsection{Background}
Consider a radial power network with a slack bus. The buses are labeled as $\mcal{N}\ldef \{0,1,\cdots,n\}$, with $0$ denoting the slack bus. The network topology is represented by a directed graph $\mcal{G} \ldef (\mcal{N},\mcal{L})$, where $\mcal{L}$ denotes the set of branches. Without loss of generality, $\mcal{L}$ can be constructed such that the directed branches point \emph{away} from the slack bus. All non-slack buses are classified as the set of \emph{load} buses $\mcal{N}^l$ and the set of \emph{generator} buses $\mcal{N}^g$ such that $\mcal{N} = \{0\}\cup \mcal{N}^g \cup \mcal{N}^l$. Let $n_g \ldef |\mcal{N}^g|$ and $n_l\ldef |\mcal{N}^l|$ denote the number of generator and load buses, respectively. Noting that the number of branches equals that of non-slack buses, each branch may be uniquely assigned the index of the bus to which it is upstream.
	
A concrete way of analyzing the physics of power flows is through the \emph{DistFlow} equations. 
 For $i\in\mcal{N}$, let $s_i \ldef p_i + \mathfrak{i}q_i$ be the complex power injection at bus $i$, $S_i \ldef P_i + \mathfrak{i}Q_i$ be the complex power flow on branch $i$, and $v_i$ and $l_i$ denote the squared voltage magnitude at bus $i$, and squared current magnitude of branch $i$, respectively. The DistFlow equations that hold for all $i\in\mcal{N}$ are given as \cite{VK-LZ-GBG-RB:2016} 	
	\begin{subequations}
		\begin{align}
			s_i &= \sum_{j\in\text{child}(i)}S_j -S_i +l_iz_i,\\
			v_i &= v_{\text{parent}(i)} - 2\text{Re} [z_i^* S_i] + l_i |z_i|^2,\\
			|S_i|^2 &= v_{\text{parent}(i)}l_i.
		\end{align}  
		\label{model:DistFlow}
	\end{subequations}
	Assuming no power line losses in the network the DistFlow equations \ref{model:DistFlow} can be linearized into the so-called \emph{LinDistFlow} model whose compact form is given as \cite{VK-LZ-GBG-RB:2016}
	\begin{align}
		\label{model:LinDistFlow}
		\mb{v} = \mb{Rp} + \mb{Xq} + v_0 \onevec{n},
	\end{align}
	where $\mb{p} \ldef[\mb{p}(1),\cdots,\mb{p}(n)]$, $\mb{q}\ldef [\mb{q}(1),\cdots,\mb{q}(n)]$, and $\mb{v}\ldef [\mb{v}(1),\cdots, \mb{v}(n)]$ denote the real and reactive power injections and squared voltage magnitude of all non-slack buses, respectively. $v_0$ is the fixed voltage of the slack bus. Positive semi-definite matrices $\mb{R},\mb{X}\in\real{n\times n}$ encode branch resistance and reactance, as well as the topology of $\mcal{G}$. Since the vector $\mb{p}$ (similarly $\mb{q}$ and $\mb{v}$) contains various indices corresponding to generators and loads, we define the matrices $\mb{A}_g\in\{0,1\}^{n_g\times n}$ and $\mb{A}_l\in\{0,1\}^{n_l\times n}$ which help us separate generation and load indices from $p$ as
	\begin{align*}
		\mb{p}_g = \mb{A}_g \mb{p},\quad \mb{q}_g = \mb{A}_g \mb{q},\quad \mb{p}_l = \mb{A}_l \mb{p},\quad \mb{q}_l = \mb{A}_l\mb{q}.
	\end{align*}
	Thanks to zero line losses, the slack bus real and reactive injections become
	$p_s = -\onevec{n}^\top \mb{p}$ and $q_s = -\onevec{n}^\top \mb{q}$, respectively.
	
	 Let $\mb{f}^p\ldef [\mb{f}^\mb{p}(1),\cdots,\mb{f}^p(n)]$ and $\mb{f}^q\ldef [\mb{f}^\mb{p}(1),\cdots,\mb{f}^q(n)]$. The branch flows are given as
	$\mb{f}^p = \mb{Fp}$ and $\mb{f}^q = \mb{Fq}$, where $\mb{F}$ is derived from the \emph{signed branch-bus incidence matrix} $\tilde{\mb{A}}\in\real{ n\times n+1}$ by deleting its first column, and invert-transposing it.
	\begin{lemma}[Properties of matrix $\tilde{\mb{A}}$]
		\label{th:matf}
		The matrix $\tilde{\mb{A}}$ is defined as
		\begin{align*}
			\tilde{\mb{A}}(i,j) = \begin{cases}
				1,&\text{if branch $i$ starts at bus $j-1$}\\
				-1,&\text{if branch $i$ terminates at bus $j-1$}\\
				0,&\text{otherwise.}
			\end{cases}
		\end{align*}
		Let $\mb{A}$ be the square matrix derived by deleting the first column of $\tilde{\mb{A}}$. $\mb{A}^{-1}$ exists~\cite{VK-LZ-GBG-RB:2016}, and $\mb{F} \ldef \mb{A}^{-\top}$.
	\end{lemma}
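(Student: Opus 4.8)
The plan is to handle the lemma's two assertions in turn: the invertibility of $\mb{A}$ (the only non-definitional claim), and the combinatorial form of $\mb{F} = \mb{A}^{-\top}$, which is what makes $\mb{F}$ usable in the flow relations $\mb{f}^p = \mb{Fp}$, $\mb{f}^q = \mb{Fq}$. First I would exploit that $\mcal{G}$ is a connected tree on the $n+1$ buses $\{0,1,\dots,n\}$ with all branches oriented away from the slack bus $0$; hence there are exactly $n$ branches, and the stated convention (branch $i$ is named after the bus it feeds) means branch $i$ terminates at bus $i$. Relabeling the buses in a topological order --- every parent preceding its children --- the deleted column of $\tilde{\mb{A}}$ is the slack column, so in $\mb{A}$ each row $i$ carries the head entry $\mb{A}(i,i) = -1$ on the diagonal and, whenever $\text{parent}(i) \neq 0$, a single tail entry $+1$ in column $\text{parent}(i) < i$. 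Thus $\mb{A}$ is lower-triangular with all diagonal entries $-1$, giving $\det(\mb{A}) = (-1)^n \neq 0$; this establishes the existence of $\mb{A}^{-1}$ self-containedly, independent of the cited reference.

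Next I would pin down $\mb{F}$ combinatorially. The cleanest check is algebraic: take the candidate matrix whose $(i,j)$ entry equals $-1$ when bus $j \in \mcal{H}_i$ (the inclusive downstream set of bus $i$, which coincides with the subtree fed by branch $i$) and $0$ otherwise, and verify it equals $\mb{A}^{-\top}$ by confirming $\mb{F}^\top\mb{A} = \mb{I}$. Writing $(\mb{F}^\top\mb{A})(k,\ell) = \sum_i \mb{F}(i,k)\,\mb{A}(i,\ell)$, only the diagonal term $i=\ell$ (value $-1$) and the child terms $\text{parent}(i)=\ell$ (value $+1$) survive; substituting the candidate entries and using the disjoint decomposition $\mcal{H}_\ell = \{\ell\} \cup \bigcup_{c\,\text{child of}\,\ell}\mcal{H}_c$, the telescoping collapses the sum to $1$ when $k=\ell$ and $0$ otherwise. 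By uniqueness of the inverse this identifies $\mb{F}$, and it matches the loss-free power-balance intuition that the branch-$i$ flow equals the net injection over its downstream subtree, $-\sum_{j\in\mcal{H}_i}\mb{p}(j)$.

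The step I expect to be the main obstacle is the simultaneous bookkeeping of two index shifts --- the column-to-bus offset (column $j$ corresponds to bus $j-1$, with the slack column removed) together with the branch-to-terminal-bus identification --- while keeping the $\pm1$ incidence signs consistent with the away-from-slack orientation; an off-by-one or a flipped head/tail sign is the easiest way to derive the wrong (transposed or sign-reversed) $\mb{F}$. Once the topological relabeling is fixed, both the triangularity argument and the telescoping verification are routine, and the remaining care is only to ensure that $\mcal{H}_i$ is taken \emph{inclusive} so that the diagonal $i=j$ term is counted exactly once.
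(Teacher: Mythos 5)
Your proposal is correct, but it is worth noting that the paper does not actually prove this lemma at all: the only non-definitional claim, invertibility of $\mb{A}$, is dispatched by citation to \cite{VK-LZ-GBG-RB:2016}, and the combinatorial description of $\mb{A}^{-\top}$ that you establish is precisely the fact the paper later imports by a second citation (\cite{DD-SB-MC:2018}) inside the proof of Lemma~\ref{lem:sjtj}, namely that $\mb{A}^{-\top}(i,j)=-1$ exactly when branch $i$ lies on the path from bus $j$ to the slack bus, i.e.\ when $j\in\mcal{H}_i$, and $0$ otherwise. Both of your steps are sound: since branch $i$ terminates at bus $i$ and all branches point away from the slack bus, a topological relabeling makes $\mb{A}$ lower triangular with $\mb{A}(i,i)=-1$ and a single $+1$ per row in column $\mathrm{parent}(i)<i$, so $\det(\mb{A})=(-1)^n\neq 0$; and your verification of $\mb{F}^\top\mb{A}=\mb{I}$ via the disjoint decomposition $\mcal{H}_\ell=\{\ell\}\cup\bigcup_{c:\,\mathrm{parent}(c)=\ell}\mcal{H}_c$ is a clean telescoping argument whose conclusion matches the worked $4$-bus example (e.g.\ $\mb{f}^{\mb{p}}(2)=-\mb{p}(1)-\mb{p}(2)$ corresponds to $\mcal{H}_2=\{1,2\}$). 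What the paper's citation-based treatment buys is brevity, appropriate to the conference format; what your route buys is self-containedness, a sharper conclusion (the explicit determinant value rather than bare invertibility), and, as a byproduct, a proof of the structural fact about the rows of $\mb{F}$ on which Lemma~\ref{lem:sjtj}, and hence the constant $K_{\bar{\mb{f}}_1}$ appearing in Theorem~\ref{prop:1}, ultimately relies.
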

	
	\subsection{Optimal Power Flow Problem} 
	A general OPF problem based on the described power network characteristics is given as follows.
	\begin{align}
		\label{eq:opf_orig}	\min\limits_{p,q,v,p_s,q_s} \quad&\mb{c}_g^\top (\mb{A}_g\mb{p}) + c_sp_s\\
		\tag{\theequation a}\label{cons:a}
		\text{s.t.} \quad& \mb{v} = \mb{Rp}+\mb{Xq}+v_0 \onevec{n}\\
		\tag{\theequation b}\label{cons:b}
		& \underline{\mb{v}} \leq \mb{v} \leq \bar{\mb{v}}\\
		\tag{\theequation c}\label{cons:c}
		& \mb{A}_l \mb{p} = \hat{\mb{p}}_l, \quad \mb{A}_l \mb{q} = \hat{\mb{q}}_l\\
		\tag{\theequation d}\label{cons:d}
		& \underline{\mb{p}}_g \leq \mb{A}_g \mb{p} \leq \bar{\mb{p}}_g, \quad \underline{\mb{q}}_g \leq \mb{A}_g \mb{q} \leq \bar{\mb{q}}_g\\
		\tag{\theequation e}\label{cons:e}
		& p_s = -\onevec{n}^\top \mb{p},\quad q_s = -\onevec{n}^\top \mb{q}\\
		\tag{\theequation f}\label{cons:f}
		& \underline{p}_s \leq p_s \leq \bar{p}_s,\quad \underline{q}_s \leq q_s \leq \bar{q}_s\\
		\tag{\theequation g}\label{cons:g}
		& \mb{f}^p = \mb{F p},\quad \mb{f}^q = \mb{F q}\\
		\tag{\theequation h}\label{cons:h}
		&\norm{ [\mb{f}^p(i),\mb{f}^q(i)]^\top}_2 \leq \bar{f}_i,\quad \forall i\in[n]
	\end{align}
	The objective function calculates the cost of generation at generator buses (given by $\mb{c}_g^\top(\mb{A}_g\mb{p})=\mb{c}_g^\top \mb{p}_g$, where $\mb{c}_g\in\real{n_g}$ is the generation cost vector) and at the slack bus (given by $c_s p_s$, where $c_s$ is the slack generation cost). ~\eqref{cons:a} is the LinDistFlow equation, while ~\eqref{cons:b} limits voltage values to within operational limits. ~\eqref{cons:c} stipulates that the demanded real and reactive power amounts are $\hat{\mb{p}}_l$ and $\hat{\mb{q}}_l$ respectively.~\eqref{cons:d},~\eqref{cons:e}, and~\eqref{cons:f} place limits on generation. Finally,~\eqref{cons:g} and~\eqref{cons:h} together describe the conic line flow for each branch.
	
	The OPF~\eqref{eq:opf_orig} contains redundancy in its decision variables and constraints. To that end, a much simplified and operatorized version of~\eqref{eq:opf_orig} may be written as follows.
	\begin{align}
		\label{eq:reduced_opf}
		\mcal{J}(\boldsymbol{\ell},\bar{\mb{f}}) \ldef \min\limits_{\mb{p}_g,\mb{q}_g} \quad&\tilde{\mb{c}}_g^\top \mb{p}_g\\
		\tag{\theequation a}\label{cons:mod_a}
		& \mb{M}_p \mb{p}_g + \mb{M}_q \mb{q}_g \leq \mb{G}\boldsymbol{\ell} + \mb{h}\\
		\tag{\theequation b}\label{cons:mod_b}
		& \underline{\mb{p}}_g \leq \mb{p}_g \leq \bar{\mb{p}}_g, \quad \underline{\mb{q}}_g \leq \mb{q}_g \leq \bar{\mb{q}}_g\\
		\tag{\theequation c}\label{cons:mod_c}
		&\norm{ \bm{ \mb{r}_i^\top \mb{p}_g + \mb{s}_i^\top \boldsymbol{\ell} \\ \mb{r}_i^\top \mb{q}_g + \mb{t}_i^\top \boldsymbol{\ell}} }_2 \leq \bar{f}_i, \quad \forall i \in [n]
	\end{align}
	In the above, $\tilde{\mb{c}}_g \ldef \mb{c}_g - c_s\onevec{n_g}$, $\boldsymbol{\ell}\ldef[\hat{\mb{p}}_l^\top,\hat{\mb{q}}_l^\top]^\top$, and $\bar{\mb{f}} \ldef [\bar{f}_1,\cdots,\bar{f}_n]^\top$. The OPF operator $\mcal{J}(\boldsymbol{\ell},\bar{\mb{f}})$ maps real \& reactive power demands and line flow limits to the optimum generation cost, which is a scalar. It can be shown that $\mcal{J}(\boldsymbol{\ell},\bar{\mb{f}})$ is jointly convex in $\boldsymbol{\ell}$ and $\bar{\mb{f}}$; see~\cite{citekeya}.
	\begin{remark}[Equivalence of~\eqref{eq:opf_orig} and~\eqref{eq:reduced_opf}]
		To establish the equivalence of~\eqref{eq:opf_orig} and~\eqref{eq:reduced_opf}, first $\mb{v}$, $p_s$, and $q_s$ can be eliminated as decision variables from~\eqref{eq:opf_orig} by replacing all of their occurrences with $\mb{Rp} + \mb{Xq} + v_0\onevec{n}$, $-\onevec{n}^\top \mb{p}$, and $-\onevec{n}^\top \mb{q}$ respectively. Further, the indices of $\mb{p}$ and $\mb{q}$ corresponding to load buses may be written as linear combinations of elements of $\boldsymbol{\ell}$ using~\eqref{cons:c}. Thus, the only effective decision variables in~\eqref{eq:opf_orig} are $\mb{p}_g, \mb{q}_g \in \real{n_g}$. Further, any expression in~\eqref{eq:opf_orig} which contains any of the decision variables $\mb{v,p,q},p_s$, or $q_s$ is linear in $\mb{p}_g$, $\mb{q}_g$ or $\boldsymbol{\ell}$. Therefore, the objective of~\eqref{eq:opf_orig} is equivalent to the objective of~\eqref{eq:reduced_opf}. Constraints~\eqref{cons:a}-\eqref{cons:e} are equivalent to ~\eqref{cons:mod_a},~\eqref{cons:f} and~\eqref{cons:mod_b}, and finally \eqref{cons:g}-\eqref{cons:h} are equivalent to ~\eqref{cons:mod_c}.
	\end{remark}
	\subsection{Illustrative Example}
	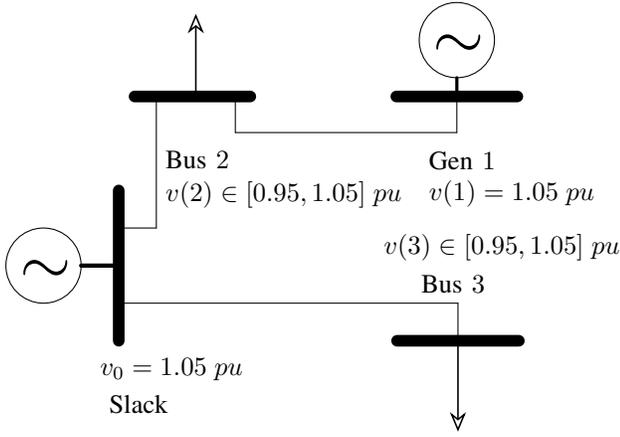
\begin{figure}[tb]
		\centering
		\begin{tikzpicture}[line cap=round,line join=round,x=1.0cm,y=1.0cm]
			\draw [line width=1.6pt] (-0.5,1.75)-- (0.,1.75);
			\draw(-1.,1.75) circle (0.5cm);
			\draw (-1.44,1.95) node[anchor=north west] {\Huge $\mathbf{\sim}$};
			\draw [line width=4.4pt] (0.,2.75) -- (0.,0.75);
			\draw (0.,2.25)-- (0.5,2.25);
			\draw (0.5,2.25)-- (0.5,4.00);
			\draw (1.55,3.52)-- (1.55,4.);
			\draw (4.495,3.52)-- (4.495,4.);
			\draw (1.55,3.52)-- (4.495,3.52);
			\draw (0.,1.25)-- (4.51,1.25);
			\draw (4.51,1.25)-- (4.51,0.75);
			\draw [line width=4.4pt] (0.21,4.00)-- (1.75,4.00);
			\draw [line width=4.4pt] (3.69,4.000)-- (5.30,4.00);
			\draw(4.495,4.75) circle (0.5cm);
			\draw (4.05,4.95) node[anchor=north west] {\Huge $\mathbf{\sim}$};
			\draw [line width=4.4pt] (3.70,0.75)-- (5.32,0.75);
			\draw [line width=1.2pt] (4.495,4.)-- (4.495,4.25);
			\draw [-{Stealth[scale=1.3,angle'=45,open]},semithick] (1.025,4.00) -- (1.025,5.1);
			\draw [-{Stealth[scale=1.3,angle'=45,open]},semithick] (4.51, 0.75) -- (4.51,-0.46);
			\draw (-0.25,0.15) node[anchor=north west] {$\text{Slack}$};
			\draw (3.9,1.75) node[anchor=north west] {$\text{Bus } 3$};
			\draw (0.5,3.4) node[anchor=north west] {$\text{Bus } 2$};
			\draw (0.5,3) node[anchor=north west] {$v(2) \in [0.95,1.05]\;pu$};
			\draw (4,3.4) node[anchor=north west] {$\text{Gen } 1$};
			\draw (-0.37,0.65) node[anchor=north west] {$v_0 = 1.05\;pu$};
			\draw (4,3.01) node[anchor=north west] {$v(1) = 1.05\;pu$};
			\draw (3.4,2.3) node[anchor=north west] {$v(3) \in [0.95,1.05]\;pu$};
		\end{tikzpicture}
		\caption{Illustrative example of a 4-bus radial network.}
		\label{fig:4bus}
  \vspace{-0.2cm}
	\end{figure}
	
	Consider a 4-bus system shown in Figure~\ref{fig:4bus}, which is derived from \texttt{case4ba} in MATPOWER~\cite{RDZ-etal:2011}. The buses are indexed such that the slack bus has index 0, the generator bus has index 1, and the two load buses have indices 2 and 3, respectively. Each branch is uniquely numbered based on its downstream bus. The impedance (in p.u.) of each branch is
	$0.003+j0.006$\,$\Omega$. 
	Thus, equation \eqref{cons:a} then becomes
	\begin{align*}
		\bm{\mb{v}(1)\\\mb{v}(2)\\\mb{v}(3)} &= \bm{0.012 & 0.006 & 	0\\0.006 & 0.006 & 0\\0 & 0 & 0.006}\bm{\mb{p}(1)\\\mb{p}(2)\\\mb{p}(3)} \\
		&+ \bm{0.024 & 0.012 & 0\\0.012 & 0.012 & 0\\0 & 0 & 0.012}\bm{\mb{q}(1)\\\mb{q}(2)\\\mb{q}(3)} + \bm{v_0 \\ v_0 \\ v_0}.
	\end{align*}
	The voltage of the generator bus and slack bus is fixed at $1.05$ p.u., while the voltage levels of the load buses vary in $[0.95,1.05]$ p.u.. Recognizing that $\boldsymbol{\ell}=[\mb{p}(2),\mb{p}(3),\mb{q}(2),\mb{q}(3)]$, $p_g = \mb{p}(1)$ and $q_g = \mb{q}(1)$, the above equation may be written as
	\begin{gather*}
		\mb{v}(1) = 0.012p_g + 0.024q_g + \bm{0.006&0&0.012&0}\boldsymbol{\ell} + v_0\\
		\bm{\mb{v}(2)\\\mb{v}(3)} = \bm{0.006\\0}p_g+ \bm{0.012\\0}q_g + \\
		\bm{0.006& 0& 0.012 & 0\\0 & 0.006 & 0 & 0.012}\boldsymbol{\ell} + \bm{v_0\\v_0}
	\end{gather*}
	Letting $\mb{v}(1) = 1.05$ (or equivalently, $1.05\leq \mb{v}(1)\leq 1.05$) and $0.95 \leq \mb{v}(i) \leq 1.05$ for $i=2,3$ recovers \eqref{cons:mod_a}. Now suppose the slack bus provides a maximum of 1 p.u. real power to the system; i.e., $p_s \leq 1$. This can be equivalently written in the form of \eqref{cons:mod_a} as $-\mb{p}_g - \bm{1&1&0&0} \boldsymbol{\ell} \leq 1$
Finally, we demonstrate a branch flow. The matrix $\mb{F}$ is given as
	\begin{align*}
		\mb{F} = \bm{-1 & 0 & 0\\-1 & -1 & 0\\0 & 0 & -1},
	\end{align*}
	and correspondingly
	\begin{gather*}
		\mb{f}^\mb{p}(1) = -\mb{p}(1), \quad \mb{f}^\mb{p}(2) = -\mb{p}(1)-\mb{p}(2), \quad \mb{f}^\mb{p}(3)=-\mb{p}(3)\\
		\mb{f}^\mb{q}(1) = -\mb{q}(1), \quad \mb{f}^\mb{q}(2) = -\mb{q}(1)-\mb{q}(2), \quad \mb{f}^\mb{q}(3)=-\mb{q}(3).
	\end{gather*}
	Suppose the branch flow limit of branch 2 is 3 p.u. It can be expressed as $\norm{\bm{\mb{f}^\mb{p}(2)&\mb{f}^\mb{q}(2)}^\top}_2 \leq 3$, equivalently,
	\begin{align*}
		\norm{\bm{(-p_g + \bm{-1 & 0&0&0}\boldsymbol{\ell}) \\ (-q_g + \bm{0 & 0&-1&0}\boldsymbol{\ell})}}_2 \leq 3.
	\end{align*}
	In other words, in the form of ~\eqref{cons:mod_c} we have
	\begin{align*}
		\mb{r}_2 &= [-1],
		\mb{s}_2^\top &= \bm{ -1 & 0 & 0 & 0},
		\mb{t}_2^\top &= \bm{0 & 0 & -1 & 0}.
	\end{align*}
For a generalized $j$, we provide the following closed form expressions on the norm values of $\mb{r}_j$, $\mb{s}_j$ and $\mb{t}_j$.
	\begin{lemma}[Norm values of $\mb{r}_j$, $\mb{s}_j$ and $\mb{t}_j$]
		\label{lem:sjtj}
		We have, for all branches $j$,
		\begin{gather*}
			\norm{\mb{r}_j}_2 =\sqrt{|\mcal{H}_j\cap \mcal{N}_g|},
			\norm{\mb{s}_j}_2 = \sqrt{|\mcal{H}_j\cap \mcal{N}_l|},\\
			\norm{\mb{t}_j}_2 = \sqrt{|\mcal{H}_j\cap \mcal{N}_l|}.
		\end{gather*}
	\end{lemma}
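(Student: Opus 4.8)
The plan is to reduce the entire statement to a single combinatorial fact about the inverse incidence matrix $\mb{F}=\mb{A}^{-\top}$, after which the three norm identities drop out by counting nonzero entries. The fact I would establish first is that, for every branch $j$ and every non-slack bus $k$, the entry $\mb{F}(j,k)$ equals $-1$ when $k\in\mcal{H}_j$ and $0$ otherwise; in other words, the $\tth{j}$ row of $\mb{F}$ is the negative indicator vector of the inclusive downstream set $\mcal{H}_j$. This is exactly the zero-loss statement that the flow on branch $j$ equals the negative of the total injection of the subtree rooted at bus $j$, a pattern already visible in the $\mb{F}$ of the illustrative example.

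To verify this structure rigorously I would introduce the candidate matrix $\mb{M}$ with $\mb{M}(j,k)\ldef-\mathbbm{1}[k\in\mcal{H}_j]$ and check the identity $\mb{A}\mb{M}^\top=\mb{I}$, which by Lemma~\ref{th:matf} certifies $\mb{M}=\mb{A}^{-\top}=\mb{F}$. Using the branch-numbering convention, row $i$ of $\mb{A}$ carries $-1$ in column $i$ (branch $i$ terminates at bus $i$) and $+1$ in column $\text{parent}(i)$ whenever the parent is non-slack, so a direct expansion yields $(\mb{A}\mb{M}^\top)(i,j)=\mathbbm{1}[i\in\mcal{H}_j]-\mathbbm{1}[\text{parent}(i)\in\mcal{H}_j]$. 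For $i=j$ this equals $1$, since $j\in\mcal{H}_j$ while $\text{parent}(j)\notin\mcal{H}_j$ in a tree; for $i\ne j$ the two indicators cancel because a node lies in the subtree $\mcal{H}_j$ if and only if its parent does. This last equivalence uses that descendants of subtree nodes remain in the subtree, and that a child of the slack bus cannot be a strict descendant of a non-slack bus. Hence $(\mb{A}\mb{M}^\top)(i,j)=\delta_{ij}$ and the claimed row structure of $\mb{F}$ holds.

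With the structure in hand I would expand $\mb{f}^p(j)=\mb{F}(j,:)\mb{p}=-\sum_{k\in\mcal{H}_j}\mb{p}(k)$ and split the sum over $\mcal{H}_j\cap\mcal{N}_g$ and $\mcal{H}_j\cap\mcal{N}_l$, which identifies $\mb{r}_j$ as the vector over generator buses carrying a $-1$ in each coordinate of $\mcal{H}_j\cap\mcal{N}_g$, and $\mb{s}_j$ as the vector over the $\hat{\mb{p}}_l$-block of $\boldsymbol{\ell}$ carrying a $-1$ in each coordinate of $\mcal{H}_j\cap\mcal{N}_l$ (with zeros throughout the $\hat{\mb{q}}_l$-block). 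Repeating the argument verbatim for $\mb{f}^q(j)=-\sum_{k\in\mcal{H}_j}\mb{q}(k)$ returns the same $\mb{r}_j$, since the generator coefficients are identical, and yields $\mb{t}_j$ as the negative indicator of $\mcal{H}_j\cap\mcal{N}_l$ in the $\hat{\mb{q}}_l$-block. Because each of these vectors has all nonzero entries equal to $\pm1$, its $2$-norm is the square root of the number of its nonzero entries, giving $\norm{\mb{r}_j}_2=\sqrt{|\mcal{H}_j\cap\mcal{N}_g|}$ and $\norm{\mb{s}_j}_2=\norm{\mb{t}_j}_2=\sqrt{|\mcal{H}_j\cap\mcal{N}_l|}$. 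I expect the only nontrivial part to be the second paragraph, namely establishing the exact $-1$/$0$ pattern of $\mb{F}$; the delicate point there is the bookkeeping for branches incident to the slack bus, whose column is deleted in forming $\mb{A}$, so that the cancellation $\mathbbm{1}[i\in\mcal{H}_j]=\mathbbm{1}[\text{parent}(i)\in\mcal{H}_j]$ still goes through when $\text{parent}(i)=0$. Everything downstream of that fact is elementary counting.
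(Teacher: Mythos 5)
Your proposal is correct, and it rests on the same structural pivot as the paper's proof: that the $\tth{j}$ row of $\mb{F}=\mb{A}^{-\top}$ is the negative $0/1$ indicator of the inclusive downstream set $\mcal{H}_j$, after which all three norms are square roots of counts of entries equal to $-1$. The difference lies in how that pivot is justified. The paper does not prove it; it cites a known characterization (Deka et al.) of the entries of $\mb{A}^{-1}$ as $\pm 1$ according to whether a branch lies along or against the path from a bus to the slack bus, observes that the chosen orientation forces all entries to be $-1$ or $0$, and transposes. You instead prove the fact from first principles by positing the candidate matrix $\mb{M}(j,k)=-\mathbbm{1}[k\in\mcal{H}_j]$ and verifying $\mb{A}\mb{M}^\top=\mb{I}$ by direct expansion, using two tree facts (a child of a node in $\mcal{H}_j$ is in $\mcal{H}_j$, and the parent of a strict descendant of $j$ is in $\mcal{H}_j$) together with the bookkeeping for slack-adjacent branches, whose parent column is deleted in forming $\mb{A}$. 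Your route is more elementary and self-contained, and it makes explicit the one genuinely delicate point (the deleted slack column) that the citation-based argument glosses over, at the cost of roughly a paragraph of extra verification; the paper's route is shorter by outsourcing the inverse structure to the literature. Both arguments conclude identically, splitting the row of $\mb{F}$ into generator coordinates ($\mb{r}_j$) and the real/reactive load coordinates ($\mb{s}_j$, $\mb{t}_j$) and counting nonzeros.
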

	\begin{proof}
		Recall that branch $j$ shares its index with its downstream bus $j$. We use the properties of matrix $\mb{F}$ in this proof. From Lemma~\ref{lem:sd}, $\mb{F}=\mb{A}^{-\top}$. It is known~\cite{DD-SB-MC:2018} that the $\tth{(i,j)}$ element of matrix $\mb{A}^{-1}$ is +1 if branch $j$ is directed \emph{along} path from $i$ to slack bus, -1 if it is directed \emph{against}, and 0 otherwise. By the construction of $\mcal{G}$, $\mb{A}^{-1}$ only has entries -1 and 0. Thus, $\mb{A}^{-\top}$ has $\tth{(i,j)}$ element -1 if branch $i$ is falls in the path from bus $j$ to the origin and 0 otherwise. Therefore, the $\tth{i}$ row of $\mb{A}^{-\top}$ collects all buses in $\mcal{H}_i$.
		 The result follows by separating generator buses and placing their coefficients in $\mb{r}_j$, and load buses in $\mb{s}_j$ and $\mb{t}_j$.
	\end{proof}
	\section{Marginal Analysis}
	The analysis of marginal pricing starts from the dual problem of~\eqref{eq:reduced_opf} that is given as follows.
	\begin{lemma}
		\label{lem:dual}
		The dual problem of~\eqref{eq:reduced_opf} is given as
		\begin{align}
			\label{eq:dual}
			\max\limits_{\substack{\pmb{\lambda},\pmb{\alpha}_{lb},\pmb{\alpha}_{ub},\pmb{\beta}_{lb},\\\pmb{\beta}_{ub},\theta_i,\phi_i,\mu_i}} \quad& -(\mb{G}\boldsymbol{\ell}+\mb{h})^\top \pmb{\lambda} + \pmb{\alpha}_{lb}^\top \underline{\mb{p}}_g - \pmb{\alpha}_{ub}^\top\bar{\mb{p}}_g + \pmb{\beta}_{lb}^\top \underline{\mb{q}}_g \notag \\
			& -\pmb{\beta}_{ub}^\top \bar{\mb{q}}_g + \sum_{i=1}^n \left[-(\theta_i \mb{s}_i + \phi_i \mb{t}_i)^\top \boldsymbol{\ell} - \mu_i \bar{f}_i \right] \\
			\tag{\theequation a}\label{eqc1}
			\text{s.t.} \quad \tilde{\mb{c}}_g + &\mb{M}_p^\top \pmb{\lambda} + \pmb{\alpha}_{ub}-\pmb{\alpha}_{lb} - \sum_{i=1}^n \theta_i \mb{r}_i = 0\\
			\tag{\theequation b}\label{eqc2}
			\mb{M}_q^\top \pmb{\lambda} &+ \pmb{\beta}_{ub}-\pmb{\beta}_{lb} - \sum_{i=1}^n \phi_i \mb{t}_i = 0\\
			\tag{\theequation c}\label{eqc3}
			&\norm{[\theta_i,\phi_i]^\top}_2 \leq \mu_i,\quad \forall i\in[n]\\
			\tag{\theequation d}\label{eqc4}
			& \pmb{\lambda},\pmb{\alpha}_{lb},\pmb{\alpha}_{ub},\pmb{\beta}_{lb},\pmb{\beta}_{ub},\{\mu_i\} \geq 0
		\end{align}
	\end{lemma}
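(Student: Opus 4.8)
The plan is to obtain \eqref{eq:dual} by applying Lagrangian (conic) duality to the convex program \eqref{eq:reduced_opf}, treating $\mb{p}_g,\mb{q}_g$ as the only primal variables and $\boldsymbol{\ell},\bar{\mb{f}}$ as fixed data. First I would attach a nonnegative multiplier to each constraint block: $\pmb{\lambda}\ge 0$ to the linear inequalities \eqref{cons:mod_a}; $\pmb{\alpha}_{ub},\pmb{\alpha}_{lb}\ge 0$ to the upper and lower bounds on $\mb{p}_g$ and $\pmb{\beta}_{ub},\pmb{\beta}_{lb}\ge 0$ to those on $\mb{q}_g$ in \eqref{cons:mod_b}; and, for each branch $i$, a conic multiplier for the second-order cone constraint \eqref{cons:mod_c}. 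The key modeling step is to write that constraint in epigraph form, requiring the triple $\left(\bar f_i,\;\mb{r}_i^\top\mb{p}_g+\mb{s}_i^\top\boldsymbol{\ell},\;\mb{r}_i^\top\mb{q}_g+\mb{t}_i^\top\boldsymbol{\ell}\right)$ to lie in the cone $\mcal{Q}=\{(\tau,\mb{w}):\norm{\mb{w}}_2\le\tau\}$, and to introduce a dual vector $(\mu_i;\theta_i,\phi_i)$ ranging over the dual cone $\mcal{Q}^{*}$. Because $\mcal{Q}$ is self-dual, $\mcal{Q}^{*}=\mcal{Q}$, which is exactly the source of the cone constraint \eqref{eqc3}, $\norm{[\theta_i,\phi_i]^\top}_2\le\mu_i$, and of the sign requirement $\mu_i\ge 0$ in \eqref{eqc4}.

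Next I would assemble the Lagrangian, pairing each conic multiplier with its epigraph triple:
\begin{align*}
L(\mb{p}_g,\mb{q}_g) &= \tilde{\mb{c}}_g^\top\mb{p}_g + \pmb{\lambda}^\top\!\big(\mb{M}_p\mb{p}_g+\mb{M}_q\mb{q}_g-\mb{G}\boldsymbol{\ell}-\mb{h}\big) + \pmb{\alpha}_{ub}^\top(\mb{p}_g-\bar{\mb{p}}_g) + \pmb{\alpha}_{lb}^\top(\underline{\mb{p}}_g-\mb{p}_g) \\
&\quad + \pmb{\beta}_{ub}^\top(\mb{q}_g-\bar{\mb{q}}_g) + \pmb{\beta}_{lb}^\top(\underline{\mb{q}}_g-\mb{q}_g) - \sum_{i=1}^n\Big[\mu_i\bar f_i + \theta_i\big(\mb{r}_i^\top\mb{p}_g+\mb{s}_i^\top\boldsymbol{\ell}\big) + \phi_i\big(\mb{r}_i^\top\mb{q}_g+\mb{t}_i^\top\boldsymbol{\ell}\big)\Big],
\end{align*}
and then form the dual function $g=\inf_{\mb{p}_g,\mb{q}_g}L$. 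Since $L$ is affine in $(\mb{p}_g,\mb{q}_g)$, this infimum equals $-\infty$ unless the coefficient vectors of $\mb{p}_g$ and of $\mb{q}_g$ each vanish. Collecting the coefficient of $\mb{p}_g$ and setting it to zero yields $\tilde{\mb{c}}_g+\mb{M}_p^\top\pmb{\lambda}+\pmb{\alpha}_{ub}-\pmb{\alpha}_{lb}-\sum_i\theta_i\mb{r}_i=0$, i.e.\ the equality \eqref{eqc1}; the analogous computation for $\mb{q}_g$ produces the companion equality \eqref{eqc2}. On the set where these two equalities hold, every term containing a primal variable cancels and $g$ collapses to the affine function of the multipliers that appears as the objective of \eqref{eq:dual}, namely $-(\mb{G}\boldsymbol{\ell}+\mb{h})^\top\pmb{\lambda}+\pmb{\alpha}_{lb}^\top\underline{\mb{p}}_g-\pmb{\alpha}_{ub}^\top\bar{\mb{p}}_g+\pmb{\beta}_{lb}^\top\underline{\mb{q}}_g-\pmb{\beta}_{ub}^\top\bar{\mb{q}}_g+\sum_i[-(\theta_i\mb{s}_i+\phi_i\mb{t}_i)^\top\boldsymbol{\ell}-\mu_i\bar f_i]$.

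Finally I would state the dual program as $\max g$ over the multipliers that are feasible, i.e.\ those obeying the stationarity equalities \eqref{eqc1}--\eqref{eqc2}, the conic constraints \eqref{eqc3}, and the nonnegativity \eqref{eqc4}, which is precisely \eqref{eq:dual}. The step I expect to require the most care—rather than a genuine obstacle—is the conic dualization of the branch-flow constraint: one must correctly pair the epigraph coordinate $\bar f_i$ with $\mu_i$ and the two affine row-expressions with $\theta_i,\phi_i$, and then invoke self-duality of $\mcal{Q}$ to land on \eqref{eqc3} with the right orientation of signs, so that the cross terms enter the objective as $-(\theta_i\mb{s}_i+\phi_i\mb{t}_i)^\top\boldsymbol{\ell}$. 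I would also verify that \eqref{eq:reduced_opf} is a convex conic program satisfying Slater's condition whenever its feasible set has nonempty relative interior, so that strong duality holds and the derived program indeed attains the common optimal value $\mcal{J}(\boldsymbol{\ell},\bar{\mb{f}})$.
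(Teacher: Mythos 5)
Your proof is correct and, at its core, follows the same Lagrangian-duality template as the paper: attach nonnegative multipliers to \eqref{cons:mod_a}--\eqref{cons:mod_b}, observe that the Lagrangian is affine in $(\mb{p}_g,\mb{q}_g)$ so that boundedness of the infimum forces the stationarity equalities \eqref{eqc1}--\eqref{eqc2}, and read off the dual objective from the remaining constant terms. Where you genuinely differ is the treatment of the branch-flow constraint: you dualize \eqref{cons:mod_c} as a conic membership constraint, pairing the triple $\left(\bar f_i,\,\mb{r}_i^\top\mb{p}_g+\mb{s}_i^\top\boldsymbol{\ell},\,\mb{r}_i^\top\mb{q}_g+\mb{t}_i^\top\boldsymbol{\ell}\right)$ with a multiplier restricted to the dual cone, and invoke self-duality of the second-order cone to obtain \eqref{eqc3} and $\mu_i\ge 0$. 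The paper instead introduces auxiliary variables $y_i,z_i$ for the two affine expressions, attaches equality multipliers $\theta_i,\phi_i$ to them and a multiplier $\mu_i$ to $\norm{[y_i,z_i]^\top}_2\le\bar f_i$, and recovers \eqref{eqc3} from the elementary identity that $\inf_{y,z}\{\theta y+\phi z+\mu\norm{[y,z]^\top}_2\}$ equals $0$ if $\norm{[\theta,\phi]^\top}_2\le\mu$ and $-\infty$ otherwise. The two routes are equivalent: the paper's infimum identity is in effect a proof of the self-duality you cite, so its argument is self-contained within scalar Lagrangian duality, while yours is more compact but presupposes the conic-duality formalism. Your closing paragraph on Slater's condition is harmless but unnecessary for this lemma: it asserts only the \emph{form} of the dual problem, and the paper handles strong duality separately via Assumption~\ref{lem:sd} and Remark~\ref{rem:sd}.

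One point deserves care. If you actually carry out the ``analogous computation'' for $\mb{q}_g$, the coefficient you must set to zero is $\mb{M}_q^\top\pmb{\lambda}+\pmb{\beta}_{ub}-\pmb{\beta}_{lb}-\sum_{i}\phi_i\mb{r}_i$, since it is $\mb{r}_i$ (not $\mb{t}_i$) that multiplies $\mb{q}_g$ in \eqref{cons:mod_c}; the vector $\mb{t}_i$ multiplies the data $\boldsymbol{\ell}$ and has the dimension of $\boldsymbol{\ell}$, not of $\mb{q}_g$. The paper's own Lagrangian yields the same $\sum_i\phi_i\mb{r}_i$, so the $\mb{t}_i$ appearing in the printed \eqref{eqc2} is evidently a typo in the statement rather than an error on your side --- but your write-up should display this equality explicitly rather than assert that the computation ``produces the companion equality,'' since what it produces differs from the equation as printed.
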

	\begin{proof}
		We augment problem~\eqref{eq:reduced_opf} by adding auxiliary variables $y_i\ldef\mb{r}_i^\top \mb{p}_g+ \mb{s}_i^\top \pmb{\ell}$ and $z_i\ldef \mb{r}_i^\top\mb{q}_g + \mb{t}_i^\top \pmb{\ell}$, and letting $\theta_i$ and $\phi_i$ denote the dual variables for the same. Constraint~\eqref{eqc3} can then be written as $\norm{[y_i,z_i]^\top}_2 \leq \bar{f}_i,\forall i \in [n]$.
		The Lagrangian of the augmented problem is given as
		\begin{align*}
			\mcal{L} \ldef& \tilde{\mb{c}}_g^\top \mb{p}_g + \pmb{\lambda}^\top (\mb{M}_p \mb{p}_g + \mb{M}_q \mb{q}_g - \mb{G}\boldsymbol{\ell} - \mb{h}) + \pmb{\alpha}_{lb}^\top (\underline{\mb{p}}_g - \mb{p}_g) \\
			& + \pmb{\alpha}_{ub}^\top (\mb{p}_g - \bar{\mb{p}}_g) + \pmb{\beta}_{lb}^\top (\underline{\mb{q}}_g - \mb{q}_g)  +\pmb{\beta}_{ub}^\top (\mb{q}_g - \bar{\mb{q}}_g) +\\
			& +\sum_{i=1}^n \bigg[ \theta_i(y_i-\mb{r}_i^\top \mb{p}_g - \mb{s}_i^\top \pmb{\ell}) + \phi_i(z_i-\mb{r}_i^\top \mb{q}_g - \mb{t}_i^\top \pmb{\ell}) \\
			& + \mu_i\left(\norm{y_i,z_i]^\top}_2 - \bar{f}_i\right) \bigg].
		\end{align*}
		The dual objective function consists of all the terms in $\mcal{L}$ which are not functions of any primal variables. Since $\mcal{L}$ is linear in $\mb{p}_g$ and $\mb{q}_g$, their respective coefficients must be zero such that $\mcal{L}$ is bounded from below. This gives rise to \eqref{eqc1} and \eqref{eqc2}. Finally, that for any $y,z,\theta,\phi\in\real{}$ and $\mu > 0$, it holds that
		\begin{align*}
			\inf\limits_{y,z} \;\;[\theta,\phi]\bm{y\\z} + \mu \norm{\bm{y\\z}}_2 = \begin{cases}
				0, & \text{if } \norm{[\theta,\phi]^\top}_2 \leq \mu,\\
				-\infty, & \text{if } \norm{[\theta,\phi]^\top}_2 > \mu.
			\end{cases}
		\end{align*}
		Applying this property to find the infimum of all terms consisting of $y_i$ and $z_i$ in $\mcal{L}$, we recover~\eqref{eqc3}. Finally, non-negativity of dual variables yields ~\eqref{eqc4}.
	\end{proof}
	We assume that there exist optimal primal and dual solutions for \eqref{eq:reduced_opf} and \eqref{eq:dual} such that Kahrush-Kuhn-Tucker (KKT) conditions hold \cite[Section 5.5.3]{boyd2004convex}.
\begin{assmp}[KKT conditions]
\label{lem:sd}
Let  $\Gamma \ldef \{ \pmb{\lambda},\pmb{\alpha}_{lb},\pmb{\alpha}_{ub},\pmb{\beta}_{lb},\pmb{\beta}_{ub},\theta_i,\phi_i,\mu_i\}$ denote the set of dual variables in \eqref{eq:dual}. Then, there exist optimal solutions $\mb{p}_g^*$ and $\mb{q}_g^*$ for primal problem~\eqref{eq:reduced_opf} and $\Gamma^*$ for dual problem~\eqref{eq:dual} which satisfy the following KKT conditions:
\begin{enumerate}
\item stationarity of $\mcal{L}$ in primal variables $\mb{p}_g$ and $\mb{q}_g$.
\item constraints \eqref{cons:mod_a}--\eqref{cons:mod_c} hold for primal variables.
\item constraints \eqref{eqc1}--\eqref{eqc4} hold for dual variables.
\end{enumerate}
\begin{align*}
\pmb{\lambda}^\top (\mb{M}_p \mb{p}_g + \mb{M}_q \mb{q}_g - \mb{G}\boldsymbol{\ell} - \mb{h}) = 0\\	\pmb{\alpha}_{lb}^\top(\underline{\mb{p}}_g - \mb{p}_g)=0, \quad \pmb{\alpha}_{ub}^\top(\mb{p}_g- \bar{\mb{p}}_g)=0\\	\pmb{\beta}_{lb}^\top(\underline{\mb{q}}_g - \mb{q}_g)=0, \quad \pmb{\beta}_{ub}^\top({\mb{q}}_g- \bar{\mb{q}}_g)=0.
\end{align*}
\end{assmp}

We now introduce the \emph{dual value function} $\mcal{D}(\Gamma,\boldsymbol{\ell},\bar{\mb{f}})$, which is defined as the objective function of dual problem~\eqref{eq:dual} as a function of \emph{any} values of dual variables $\Gamma$, demands $\boldsymbol{\ell}$, and flow limits $\bar{\mb{f}}$. Assumption~\ref{lem:sd} allows us to exploit duality theory in order to equate the dual value function to the operator $\mcal{J}(\boldsymbol{\ell},\bar{\mb{f}})$.
	\begin{remark}[Strong duality]
		\label{rem:sd}
		Since the primal problem~\eqref{eq:reduced_opf} is convex in the decision variables, Assumption~\ref{lem:sd} ensures that strong duality holds~\cite[Section 5.5.3]{boyd2004convex}, i.e. $\mcal{J}(\boldsymbol{\ell},\bar{\mb{f}}) = \mcal{D}(\Gamma^*,\boldsymbol{\ell},\bar{\mb{f}})$.
	\end{remark}
	We now provide a closed form of the \emph{flow marginal costs} as a function of the optimum dual variables.
	\begin{lemma}
		\label{lem:fmc}
		The flow marginal cost for line flow limits $\bar{\mb{f}}$, denoted as $C^{flow}_{\bar{\mb{f}}}$ is given as
		\begin{align}
			C^{\text{flow}}_{\bar{\mb{f}}}(j) \ldef \nabla_{\bar{\mb{f}}(j)} \mcal{J}(\boldsymbol{\ell},\bar{\mb{f}}) = -\mu^*_j,\quad \forall j\in[n]
		\end{align}
		Moreover, if $\mb{p}_{g1}^*$ is the optimum real generation with flow limits $\bar{\mb{f}}_1$ and $\mb{p}_{g2}^*$ with $\bar{\mb{f}}_2$ where $\bar{\mb{f}}_1 \succeq \bar{\mb{f}}_2$, then $\tilde{\mb{c}_g}^\top \mb{p}_{g1}^* \leq \tilde{\mb{c}_g}^\top \mb{p}_{g2}^*$.
	\end{lemma}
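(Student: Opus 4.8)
The plan is to prove the gradient formula by combining strong duality with an envelope argument, and then to obtain the monotonicity claim almost for free. For the gradient, I would begin by invoking Remark~\ref{rem:sd}, which not only gives $\mcal{J}(\boldsymbol{\ell},\bar{\mb{f}}) = \mcal{D}(\Gamma^*,\boldsymbol{\ell},\bar{\mb{f}})$ but lets me write $\mcal{J}(\boldsymbol{\ell},\bar{\mb{f}}) = \max_{\Gamma}\,\mcal{D}(\Gamma,\boldsymbol{\ell},\bar{\mb{f}})$, where the maximization runs over the feasible set carved out by the dual constraints \eqref{eqc1}--\eqref{eqc4}. The crucial structural observation is that this dual feasible set is \emph{independent} of $\bar{\mb{f}}$: the flow limits do not appear in \eqref{eqc1}--\eqref{eqc4}, and enter the dual objective \eqref{eq:dual} only through the single linear term $-\mu_j\,\bar{f}(j)$ for each $j\in[n]$.

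Given this decoupling, $\mcal{J}$ is a pointwise maximum, over a \emph{fixed} set, of functions that are affine (hence differentiable) in $\bar{\mb{f}}$. I would then apply the envelope theorem of Danskin: the gradient of the maximum with respect to $\bar{f}(j)$ equals the partial derivative of $\mcal{D}$ with respect to $\bar{f}(j)$ evaluated at the maximizer $\Gamma^*$. Since $\partial\mcal{D}/\partial\bar{f}(j) = -\mu_j$, this evaluates to $-\mu_j^*$, yielding $C^{\text{flow}}_{\bar{\mb{f}}}(j) = -\mu_j^*$.

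The step I expect to require the most care is the differentiability justification underlying this envelope argument. In full generality the value function of a convex program is convex but its parametric derivative is only guaranteed to be a \emph{subgradient} — here $\partial\mcal{J}(\boldsymbol{\ell},\bar{\mb{f}})$ contains $-\mu_j^*$ along coordinate $j$, and collapses to the genuine gradient $\nabla$ exactly when the optimal $\mu_j^*$ is unique, which holds when the dual optimizer $\Gamma^*$ is unique. I would therefore either invoke uniqueness of $\Gamma^*$ (consistent with the KKT regularity posited in Assumption~\ref{lem:sd}), or state the identity via the subdifferential and remark that it reduces to the claimed gradient wherever $\mcal{J}$ is differentiable.

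For the monotonicity claim I would first note that $\tilde{\mb{c}}_g^\top\mb{p}_{gk}^* = \mcal{J}(\boldsymbol{\ell},\bar{\mb{f}}_k)$ for $k\in\{1,2\}$, so the assertion is precisely $\mcal{J}(\boldsymbol{\ell},\bar{\mb{f}}_1)\leq\mcal{J}(\boldsymbol{\ell},\bar{\mb{f}}_2)$. This follows immediately from feasible-set containment: since $\bar{\mb{f}}$ enters the primal \eqref{eq:reduced_opf} only as the right-hand side of the norm constraints \eqref{cons:mod_c}, the hypothesis $\bar{\mb{f}}_1\succeq\bar{\mb{f}}_2$ enlarges the feasible region, and minimizing the same objective over a larger set can only lower the optimum. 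Equivalently, it is a direct corollary of the first part: because $\mu_j^*\geq 0$ by \eqref{eqc4}, each partial $\nabla_{\bar{f}(j)}\mcal{J} = -\mu_j^*\leq 0$, so $\mcal{J}$ is nonincreasing in every coordinate of $\bar{\mb{f}}$, and integrating along the coordinatewise-nondecreasing segment from $\bar{\mb{f}}_2$ to $\bar{\mb{f}}_1$ gives the inequality.
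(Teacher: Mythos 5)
Your proposal follows essentially the same route as the paper: strong duality (Remark~\ref{rem:sd}) to write $\mcal{J}(\boldsymbol{\ell},\bar{\mb{f}}) = \mcal{D}(\Gamma^*,\boldsymbol{\ell},\bar{\mb{f}})$, differentiation of the dual objective in $\bar{\mb{f}}(j)$ to obtain $-\mu_j^*$, and feasible-set containment for the monotonicity claim. The only difference is that you make explicit the Danskin/envelope justification and the subdifferential-versus-gradient caveat, which the paper's proof passes over silently; this is a refinement of the same argument, not a different one.
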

	\begin{proof}
		Due to strong duality, it follows that $\mcal{J}(\boldsymbol{\ell},\bar{\mb{f}}) = \mcal{D}(\Gamma^*,\boldsymbol{\ell},\bar{\mb{f}})$. Taking derivative of $\mcal{D}(\Gamma^*,\boldsymbol{\ell},\bar{\mb{f}})$ with respect to each of the elements in $\bar{\mb{f}}$ recovers the closed form of $C^{flow}_{\bar{\mb{f}}}$. The second part of the result follows from observing that the feasible space of~\eqref{eq:reduced_opf} is smaller under $\bar{\mb{f}}_2$ than $\bar{\mb{f}}_1$, leading to same or higher objective value.
	\end{proof}
	To this end, we present the main result as follows.
	\begin{thm}[Bounding marginal prices]
		\label{prop:1}
		Suppose at optimality of problem~\eqref{eq:reduced_opf}, $\mcal{I}(\bar{\mb{f}}_1)\subseteq[n]$ is the nonempty index set collecting the binding constraints in \eqref{cons:mod_c} for flow limits $\bar{\mb{f}}_1$. Then, the congested marginal cost of load demand is given as 
		\begin{align}
			C^{\text{load}}_{\bar{\mb{f}}_1} \ldef \nabla_{\boldsymbol{\ell}} \mcal{J}(\boldsymbol{\ell},\bar{\mb{f}}_1) = -\pmb{\lambda}^{*\top}\mb{G}  - \sum_{i\in\mcal{I}(\bar{\mb{f}}_1)} (\theta_i^* \mb{s}_i + \phi_i^* \mb{t}_i)^\top .
		\end{align}
\begin{figure*}[ht!]
	\includegraphics[width=\linewidth]{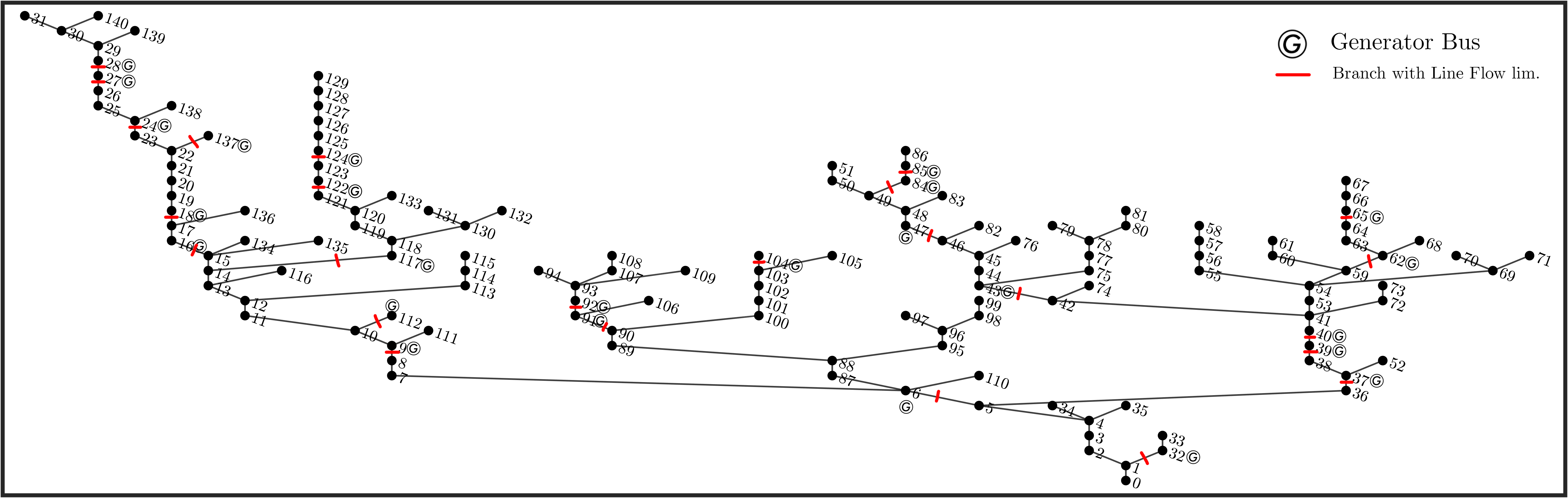}
	\caption{141-bus distribution system, adapted from MATPOWER \texttt{case141}. We retain the topology and real/reactive load demands from \texttt{case141}, but randomly add 25 distributed generation buses. The real power generation of each distributed generator is limited to $p_g\in[0,0.0654]pu$, while the reactive power generation follows limits $q_g \in [-0.0270,0.0270]pu$.}
	\label{fig:topology}
\end{figure*}
\begin{figure*}[ht!]
	\centering
	\includegraphics[width=0.9\linewidth]{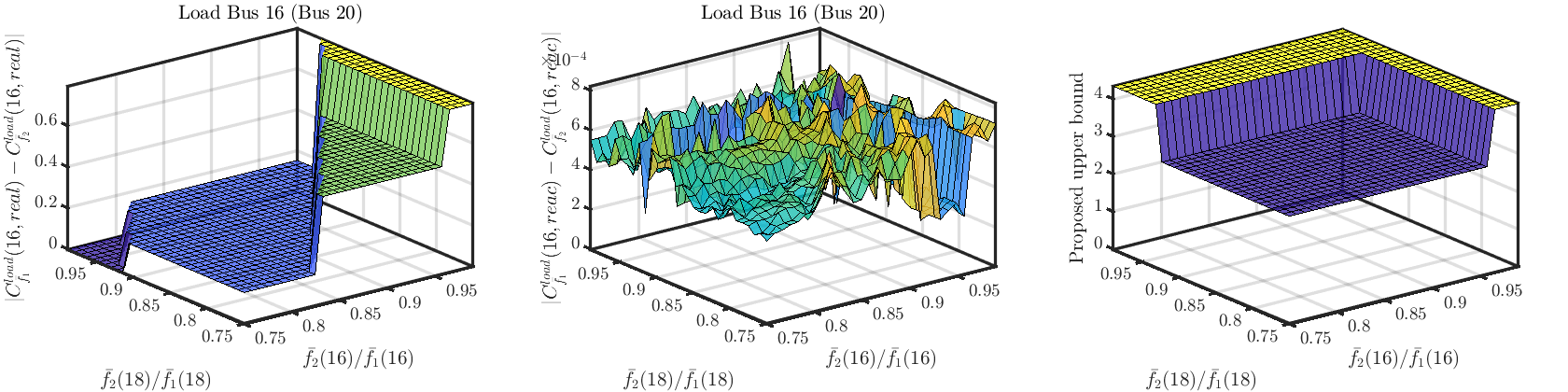}
	\caption{Results from simultaneously perturbing two branch flow limits from Figure~\ref{fig:topology}. We choose branches 16 and 18 to perturb since branch 18 is downstream to branch 16, making for easier interpretability of results. The first two figures indicate marginal costs of real and reactive power respectively, while the third figure is the proposed upper bound on the quantities derived in both the first and second figure. As we see, the upper bound is valid for both the quantities.}
	\label{fig:results}
\end{figure*}
		Furthermore, let $\mcal{I}(\bar{\mb{f}}_2)$ be empty for flow limits $\bar{\mb{f}}_2$, denoting the case wherein the network is uncongested. We have,
		\begin{gather}
			\tag{real}
			|C^{\text{load}}_{\bar{\mb{f}}_1}(i) - C^{\text{load}}_{\bar{\mb{f}}_2}(i)| \leq K_{\bar{\mb{f}}_1}\sum_{j\in \mcal{I}(\bar{\mb{f}}_1)} |C^{\text{flow}}_{\bar{\mb{f}}_1}(j)|, \\
			\tag{reactive}
			|C^{\text{load}}_{\bar{\mb{f}}_1}(n_l+i) - C^{\text{load}}_{\bar{\mb{f}}_2}(n_l+i)| \leq K_{\bar{\mb{f}}_1} \sum_{j\in \mcal{I}(\bar{\mb{f}}_1)}|C^{\text{flow}}_{\bar{\mb{f}}_1}(j)|
		\end{gather}
		for all $i\in[n]$, $K_{\bar{\mb{f}}_1}>0$ is a constant defined as $K_{\bar{\mb{f}}_1} \ldef {(n_{nz}^{(j)})}^{\frac{1}{2}}\max\limits_{j\in\mcal{I}(\bar{\mb{f}}_1)}~\norm{[\mb{s}_j^\top,\mb{t}_j^\top]^\top}_2$, where $n_{nz}^{(j)}$ is the number of nonzeros in $\mb{s}_j$ (same as $\mb{t}_j$).
	\end{thm}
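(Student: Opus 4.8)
The plan is to handle the closed form first and then the bound. For the closed form of $C^{\text{load}}_{\bar{\mb{f}}_1}$, I would invoke strong duality (Remark~\ref{rem:sd}) to write $\mcal{J}(\boldsymbol{\ell},\bar{\mb{f}}_1) = \mcal{D}(\Gamma^*,\boldsymbol{\ell},\bar{\mb{f}}_1)$ and then differentiate the dual value function with respect to $\boldsymbol{\ell}$. Because $\Gamma^*$ is dual-optimal, an envelope (Danskin-type) argument lets me hold $\Gamma^*$ fixed while differentiating, and since the dual objective in~\eqref{eq:dual} is affine in $\boldsymbol{\ell}$ for fixed $\Gamma$, only the two $\boldsymbol{\ell}$-dependent groups survive: $-(\mb{G}\boldsymbol{\ell}+\mb{h})^\top\pmb{\lambda}^*$ and $\sum_i -(\theta_i^*\mb{s}_i+\phi_i^*\mb{t}_i)^\top\boldsymbol{\ell}$. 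Their gradient is precisely $-\pmb{\lambda}^{*\top}\mb{G}-\sum_i(\theta_i^*\mb{s}_i+\phi_i^*\mb{t}_i)^\top$. To collapse the sum onto $\mcal{I}(\bar{\mb{f}}_1)$, I would use complementary slackness from Assumption~\ref{lem:sd}: for any non-binding conic constraint $i\notin\mcal{I}(\bar{\mb{f}}_1)$ we have $\mu_i^*=0$, and constraint~\eqref{eqc3}, $\norm{[\theta_i,\phi_i]^\top}_2\le\mu_i$, then forces $\theta_i^*=\phi_i^*=0$. Hence every term outside $\mcal{I}(\bar{\mb{f}}_1)$ vanishes, giving the stated expression.

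For the bound, I would apply the same closed form to the uncongested limits $\bar{\mb{f}}_2$. Since $\mcal{I}(\bar{\mb{f}}_2)=\emptyset$, the entire congestion sum disappears and $C^{\text{load}}_{\bar{\mb{f}}_2}=-\pmb{\lambda}^{*\top}\mb{G}$. Subtracting the two marginal-cost vectors, the voltage/generation contribution $-\pmb{\lambda}^{*\top}\mb{G}$ is common to both settings and cancels, so the difference reduces to $-\sum_{j\in\mcal{I}(\bar{\mb{f}}_1)}(\theta_j^*\mb{s}_j+\phi_j^*\mb{t}_j)^\top$. I would then exploit the support structure established around Lemma~\ref{lem:sjtj}: $\mb{s}_j$ lives on the real-load block of $\boldsymbol{\ell}$ and $\mb{t}_j$ on the reactive-load block, so for a real index $i\in[n_l]$ only the $\theta_j^*\mb{s}_j(i)$ terms contribute, and for a reactive index $n_l+i$ only the $\phi_j^*\mb{t}_j$ terms contribute.

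The final step is the norm estimate. Taking absolute values and applying the triangle inequality, then Cauchy–Schwarz in the form $\norm{\theta_j^*\mb{s}_j+\phi_j^*\mb{t}_j}_2\le\norm{[\mb{s}_j^\top,\mb{t}_j^\top]^\top}_2\,\norm{[\theta_j^*,\phi_j^*]^\top}_2$ (valid since the Frobenius norm of $[\mb{s}_j,\mb{t}_j]$ bounds its operator norm), and finally using $\norm{[\theta_j^*,\phi_j^*]^\top}_2\le\mu_j^*=|C^{\text{flow}}_{\bar{\mb{f}}_1}(j)|$ from~\eqref{eqc3} and Lemma~\ref{lem:fmc}, I would bound each component by a multiple of $\sum_{j\in\mcal{I}(\bar{\mb{f}}_1)}|C^{\text{flow}}_{\bar{\mb{f}}_1}(j)|$. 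Pulling out $\max_{j\in\mcal{I}(\bar{\mb{f}}_1)}\norm{[\mb{s}_j^\top,\mb{t}_j^\top]^\top}_2$ and substituting the identities $\norm{\mb{s}_j}_2=\norm{\mb{t}_j}_2=\sqrt{n_{nz}^{(j)}}$ from Lemma~\ref{lem:sjtj} assembles the constant $K_{\bar{\mb{f}}_1}$; the identical chain of inequalities applies verbatim to the reactive components, giving the two stated bounds.

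The main obstacle is the cancellation of the $\pmb{\lambda}^\top\mb{G}$ term. Strictly, the uncongested problem carries its own optimal multiplier, and the difference $C^{\text{load}}_{\bar{\mb{f}}_1}-C^{\text{load}}_{\bar{\mb{f}}_2}$ in general contains a residual $-(\pmb{\lambda}_1^*-\pmb{\lambda}_2^*)^\top\mb{G}$ that is \emph{not} controlled by the flow marginal costs. To obtain the clean bound I would need to argue (or assume) that perturbing only the branch flow limits leaves the active set and optimal multipliers of the voltage and generation constraints unchanged, so that $\pmb{\lambda}_1^*=\pmb{\lambda}_2^*$ and the residual is zero; precisely stating and justifying this invariance is the crux. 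By contrast, the envelope-theorem differentiation and the subsequent norm manipulations are routine once Lemmas~\ref{lem:sjtj} and~\ref{lem:fmc} are available.
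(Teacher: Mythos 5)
Your proposal follows essentially the same route as the paper's own proof: the closed form is obtained by differentiating the dual value function under strong duality with complementary slackness (via $\mu_i^*=0$ and~\eqref{eqc3}) eliminating the non-binding conic terms, and the bound is obtained by the triangle inequality, Cauchy--Schwarz, the dual constraint~\eqref{eqc3}, and the identification $\mu_j^*=|C^{\text{flow}}_{\bar{\mb{f}}_1}(j)|$ from Lemma~\ref{lem:fmc}, with your operator-norm version of Cauchy--Schwarz in fact yielding a slightly tighter constant than the paper's $K_{\bar{\mb{f}}_1}$. The obstacle you flag in your final paragraph is genuine but is present in the paper as well: its proof simply asserts $C^{\text{load}}_{\bar{\mb{f}}_1} - C^{\text{load}}_{\bar{\mb{f}}_2} = -\sum_{i\in\mcal{I}(\bar{\mb{f}}_1)} (\theta_i^* \mb{s}_i + \phi_i^* \mb{t}_i)^\top$, thereby silently assuming the multiplier $\pmb{\lambda}^*$ is common to the congested and uncongested problems, so your explicit identification of this needed invariance makes your write-up, if anything, more careful than the original.
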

	\begin{proof}
		The closed form of $C^{\text{load}}_{\bar{\mb{f}}_1}$ may be derived similar to the proof of Lemma~\ref{lem:fmc}. Note that
		$C^{\text{load}}_{\mb{\bar{f}}_1} - C^{\text{load}}_{\mb{\bar{f}}_2} = -\sum_{i\in\mcal{I}(\bar{\mb{f}}_1)} (\theta_i^* \mb{s}_i + \phi_i^* \mb{t}_i)^\top$. Thus,
		\begin{gather*}
			\left|C^{\text{load}}_{\mb{\bar{f}}_1}(i) - C^{\text{load}}_{\mb{\bar{f}}_2}(i)\right| \\
			= \left| -\sum_{j\in \mcal{I}(\bar{\mb{f}}_1)} \theta^*_j \mb{s}_j + \phi^*_j \mb{t}_j\right| \leq \sum_{j\in \mcal{I}(\bar{\mb{f}}_1)} \left| \bm{\mb{s}_j\\\mb{t}_j}^\top\bm{\theta_j^* \mb{z}^s_j\\ \phi^*_j\mb{z}^t_j} \right|\\
			\stackrel{[a]}{\leq} \sum_{j\in \mcal{I}(\bar{\mb{f}}_1)}\left( \norm{\bm{\mb{s}_j\\\mb{t}_j}}_2\times \norm{\bm{\theta_j^*\mb{z}^s_{j}\\\phi^*_j\mb{z}^t_{j}}}_2 \right)\quad\stackrel{[b]}{\leq} \sum_{j\in\mcal{I}(\bar{\mb{f}}_1)} \mu_j^* K_{\bar{\mb{f}}_1},
		\end{gather*}
		where $\mb{z}^s_j$ and $\mb{z}^t_j$ are vectors which have value $1$ at indices where $\mb{s}_j$ and $\mb{t}_j$ are nonzero respectively, and $0$ otherwise. $[a]$ is due to the Cauchy-Schwartz inequality while $[b]$ follows from dual ~\eqref{eqc3} and uniformly upper bounding the term $\norm{[\mb{s}_j^\top,\mb{t}_j^\top]^\top}_2$ for all $j\in\mcal{I}(\bar{\mb{f}}_1)$.
		Observing that $\nabla_{\bar{f}_j} \mcal{J}(\boldsymbol{\ell},\bar{\mb{f}}) = -\mu_j^*$ and $\mu_j^*\geq 0$ concludes the proof. 
	\end{proof}
	Lemma~\eqref{lem:sjtj} may be used to find the closed form of $K_{\mb{\bar{f}}}$ for different $\bar{\mb{f}}$.
	In the following remark, we highlight some use cases of Theorem~\ref{prop:1}.
	\begin{remark}[Use cases of Theorem~\ref{prop:1}]
		Note that system operators often possess datasets of the form $\{ {C^{\text{flow}}_{\mb{\bar{f}}}}_k,\bar{\mb{f}}_k,p_k\}$, where $p_k\in[0,1]$ is the probability of sample $k$ being representative of a desired scenario. Such a probability may be derived empirically by observing frequencies of similar data points in the dataset
		\begin{itemize}
			\item The bounds may be used to quickly approximate the worst-case changes in power tariffs as a function of implemented line flow limits $\bar{\mb{f}}$ using existing datasets, without re-running the full OPF problem for different values of $\bar{\mb{f}}$. 
			\item The capability to compute the marginal prices $C^{\text{flow}}$ and $C^{\text{load}}$ as a function of $\Bell$ and $\bar{\mb{f}}$ may be used to train novel sensitivity informed deep learning architectures~\cite{MKS-VK-GBG:2022}. Bounds based on sensitivity of the OPF solution to input parameters can also be used to construct triggering conditions in distributed optimization using the principle of event-triggered communication~\cite{sb-pt:2021}.
			\item If the power demand $\boldsymbol{\ell}$ is a random variable drawn from a known distribution, then Theorem~\ref{prop:1} can provide a method to quantify the statistical properties of demand marginal costs. Such analyses generally fall under the domain of probabilistic (optimal) power flow, which is a topic of significant research due to its possible applications in deep-learning based architectures for power systems~\cite{9817505}.
		\end{itemize}
	\end{remark}
	\section{Simulation Result}

In this section, we experimentally validate the bounds derived in Theorem~\ref{th:matf} by carrying out simulations on a 141-bus single-phase radial power network derived from \texttt{case141} in MATPOWER~\cite{RDZ-etal:2011}, as shown in Figure~\ref{fig:topology}. Due to \texttt{case141} originally containing only a single generator at the slack bus, we modify the same to add distributed generation. This is done according to the following steps.
\begin{itemize}
	\item First, 25 buses are randomly selected to host distributed generation. Then per generator bounds are chosen such that the total upper bound of real power generation approximately equals the total real power demand at the load buses. The real power cost of distributed generator buses are chosen uniformly-at-random in the range $[0,1]$, while the slack bus generation is chosen to be more expensive than any distributed generator.
	\item Following introduction of distributed generators, we run the OPF without any branch flow limits to generate optimal generation and branch flow setpoints. This is done using CVX~\cite{cvx} and MATLAB. Once the optimal flow setpoints are generated, the branch flow limit of the line upstream to any distributed generator is set to its corresponding flow setpoint.
	\item Two branches, \emph{viz.} 16 and 18 are selected, and their branch flow limits are simultaneously reduced upto 75\% of their original values. For each step in the reduction, the perturbation of real and reactive marginal cost of load demand at bus 20 (a load bus) is recorded, and presented in Figure~\ref{fig:results}. The proposed upper bound is also presented in the same figure.
\end{itemize}

	As seen in Figure~\ref{fig:results}, the proposed bounds are respected by the actual perturbation in marginal costs. Here, both the marginal cost perturbations and the upper bound are derived from the formulations presented in Theorem~\ref{th:matf}. It should be noted that the results in Theorem~\ref{th:matf} use optimal dual variables in its formulation, which are generated in most modern optimization solvers. Another interesting observation in Figure~\ref{fig:results} is that the marginal cost of reactive power is within some numerical error of zero. This is because the objective function of OPF~\ref{eq:opf_orig} does not penalize generation/consumption of reactive power. Were an objective term concering reactive power to be added to the OPF, we would observe non-zero marginal costs for reactive power alongside real power demand.
	
	\section{Conclusion}
	In this paper, we formulated the LDF-OPF problem and derived upper bounds on the marginal prices of real and reactive power demand for all load buses. We also presented certain approaches showing how the main result in Theorem~\ref{prop:1} can be utilized for OPF-based planning problems. Future work will develop results in this paper to accelerate solve times of LDF-OPF.
	
	\section*{Acknowledgements}
	The authors would like to thank Professor Yihsu Chen at the University of California, Santa Cruz for helpful discussions on this work and its future directions.

\end{document}